\newtheorem{theorem}{Theorem}
\newtheorem{remark}{Remark}[section]
\newcommand{\R}{\mathbb R}
\renewcommand{\t}{^{\mbox{\tiny\sf T}}}
\renewcommand{\t}{^{\mbox{\tiny\sf T}}}
\newcommand{\mE}{\mathbb{E}}
\newcommand{\V}{\mathcal{V}}
\newcommand{\G}{\mathcal{G}}
\newcommand{\N}{\mathcal{N}}
\newcommand{\A}{\mathcal{A}}
\renewcommand{\L}{\mathcal{L}}
\newcommand{\E}{\mathcal{E}}
\begin{document}
\title {Winners Take All: A Reverse Consensus Model }
%

\author{Zhiyong Chen 
	\thanks{Z. Chen is with the School of Engineering, The University of Newcastle, Callaghan, NSW 2308, Australia, Tel: +61249216352, Fax: +61249216993.   
		E-mail:  zhiyong.chen@newcastle.edu.au.}}

 \maketitle
	
	
\begin{abstract} 
This paper introduces a nonlinear multi-agent dynamic model that characterizes the resource-seizing mechanism for a fixed amount of resources. The model demonstrates a winners-take-all behavior within a zero-sum game framework. It represents one of the simplest dynamics where equilibria correspond to states of winners and losers, with every trajectory converging to such an equilibrium. Notably, when the model operates in reverse time, it resembles a multi-agent consensus model, referred to as a reverse consensus model. The key characteristics of this model are explored through rigorous analysis.
 \end{abstract}
	
\begin{IEEEkeywords}
Multi-agent systems, nonlinear systems, winners-take-all, zero-sum game, consensus
 \end{IEEEkeywords}

\section{Introduction}

Developing simple models to describe complex behavior or phenomena is essential because it reduces systems to their key components, making them easier to understand and analyze. The consensus model in multi-agent systems is one such model, with widespread contributions across various fields. Its origins trace back to social and behavioral sciences, particularly social network analysis, with the French-DeGroot model \cite{french1956formal,degroot1974reaching} being a foundational example. In physics, consensus models have also been explored through simulations \cite{reynolds1987flocks,vicsek1995novel} and given theoretical backing in \cite{jadbabaie2003coordination}. These models, along with their advanced variants, have attracted significant research interest for decades.

Average consensus specifically seeks for all agents to agree on the average of their initial values, representing pure cooperation. This is conceptually similar to zero-sum games, where resources are fixed and allocated among players. However, in zero-sum games, players primarily compete for these resources. Competition is crucial in the development of many complex behaviors. In competitive scenarios, winners-take-all dynamics occur when some players secure all the resources while others receive nothing, maintaining a constant total resource. This paper introduces and explores a simple multi-agent model that captures this phenomenon and examines its relationship with the consensus model.

Winners-take-all behaviors appear in various contexts. In free markets, companies may vie for the same customer base, leading to dominance by one due to factors like economies of scale or network effects, resulting in a monopolistic scenario \cite{noe2005winner}. Similarly, social media platforms compete for user attention, with popularity and value increasing for the platform that attracts more users \cite{bourai2023winner}. In ecosystems, species compete for limited resources, which can lead to one species becoming dominant in its niche \cite{kocher2023darwinian}.

A classical mathematical model for competitive dynamics is the Lotka-Volterra model, first proposed by Lotka \cite{lotka1925elements} and Volterra \cite{volterra1927variations}. Originally developed to describe predator-prey interactions in ecological systems, this model consists of a set of differential equations that capture how predator and prey populations affect each other.  The Lotka-Volterra model can also be adapted and generalized to study broader competition and cooperation dynamics, as explored in works such as \cite{hirsch1988systems,zeeman1998three}.
 
 In \cite{maurer2003competitive}, the Lotka-Volterra model is generalized to describe the dynamics of website growth. The model demonstrates that as competition between agents intensifies, a sudden transition occurs from a regime where many agents coexist to a winners-take-all equilibrium, where a few agents capture nearly all the resources while most others become nearly extinct. This equilibrium is driven by a balance between an agent’s growth and the losses due to competition, differing from the pure resource-seizing mechanism found in zero-sum games.
 
The strength of competition is characterized by a constant in \cite{maurer2003competitive}, which was later extended to a linear form, giving rise to the ``rich-get-richer" effect \cite{yanhui2007competitive,yanhui2006qualitative}. Further generalizations in \cite{caram2010dynamic,caram2014complex} introduced a nonlinear interaction function between agents, where the interaction strength depends on the difference in agent sizes. This formulation can capture both competitive and cooperative dynamics, leading to agent clustering in the stationary state. Despite these extensions, the fundamental mechanism for achieving equilibrium, including the winners-take-all outcome, remains unchanged. However, this paper aims to develop a new model to capture the winners-take-all mechanism specifically for pure resource-seizing behavior in a zero-sum game.

It is worth noting that the winners-take-all model is formulated differently in some related works, where it describes competition among a group of agents, with only the one receiving the largest input remaining active while all others are deactivated. For instance, in \cite{binas2014learning}, the model consists of excitatory units connected to a common inhibitory unit, which provides recurrent inhibitory feedback. The excitatory unit receiving the strongest input suppresses all other units via this feedback loop, effectively ``winning'' the competition.  A similar process plays a role in stable long-term memory formation, as discussed in \cite{sajikumar2014competition}. A theoretical analysis of this winners-take-all mechanism is presented in \cite{li2016distributed}, with an extension to allow multiple winners, corresponding to the top $k$ inputs, in \cite{liang2023distributed}. Unlike the mechanism studied in this paper, these models determine winners based solely on the magnitude of inputs. 
 
 In the remainder of this paper, the new model is introduced in Section~\ref{sec:model}. It features a simple nonlinear multi-agent dynamic that effectively captures winners-take-all behavior within a zero-sum game framework, where agents compete for a fixed amount of resources. A rigorous analysis is presented in this section, demonstrating that every trajectory converges to a winners-take-all equilibrium. Interestingly, when the model is considered in reverse time, it resembles a multi-agent consensus model, referred to as a reverse consensus model. Numerical simulations are provided in Section~\ref{sec:simu}, while Section~\ref{sec:conclusion} concludes the paper with a discussion of future research directions.

Some of the notations used in this paper are defined as follows. A undirected graph $\G=\{\V,\E\}$ is considered for the network studied in this paper, where $\V=\{1,\ldots,n\}$ denotes the set of nodes and $\E\subset\V\times\V$ represents the set of edges.  Additionally, no self-loops exist, i.e., $(i,i)\notin \E,\;\forall i\in \mathcal V$.  The weighted adjacency matrix  $\A \in\R^{n \times n}$ has an $(i,j)$-entry $a_{ij}>0$ if $(j,i)\in\E$ and $a_{ij}=0$ if $(j,i)\notin\E$. For a subset $\bar \V \subseteq \V$, define an operator $\wedge$ as $\E \wedge   \bar \V  =\{(i,j)\in \E \;|\; i, j \in \bar \V\} $ and $\G \wedge   \bar \V = \{ \bar \V, \E \wedge   \bar \V \}$. For a set $\V$, $|\V|$denotes its cardinality.  The set difference between two sets $\V_1$ and $\V_2$ is written as $\V_1 \setminus \V_2$. That is, $i \in \V_1 \setminus \V_2$ means that $i \in \V_1$ and $i \notin \V_2$. The $n$-dimensional set of non-negative real numbers is denoted as  $\R^n_+$, which can be represented as:
\begin{equation*}
\R^n_+= \{ x=[x_1, x_2, \dots, x_n]\t \mid x_i \geq 0  \text{ and } x_i \in \R, \, \forall i \in \V \}.
\end{equation*}

 \section{The Model}
 
 \label{sec:model}

The winner-takes-all model is defined by the following nonlinear multi-agent dynamics:
 \begin{align}
 \dot x_i = \sum_{j \in \N_i} a_{ij} (x_i-x_j) x_i x_j,\;\forall i\in \V \label{model}
 \end{align}
where $x=[x_1, x_2, \dots, x_n]\t \in \R^n_+$ with $x_i(t)$ representing the state of each agent at time $t$. The time variable $t$ is omitted for simplicity.  It is clear that $\R^n_+$ is an invariant set. Specifically, if the initial state satisfies $x(0) \in \R^n_+$, then the states remain within $x_i(t) \in \R^n_+$ for all $t \geq 0$. This is because when $x_i(t) = 0$, the dynamics $\dot{x}_i(t) = 0$ ensure that the state remains at the boundary. In this section, we will explore several key characteristics of the model.

The state value $x_i$ of an agent represents the amount of resource it occupies. A higher resource level indicates a stronger agent. A stronger agent, $x_i$, takes resources from a weaker agent, $x_j$, when a connection exists between them with $a_{ij} > 0$, modeled by the change in $x_i$ in the direction of $x_i - x_j$. This transfer of resources reflects competitive behavior. The intensity of the competition is proportional to the strengths of both agents, $x_i$ and $x_j$, and is represented by their product, $x_i x_j$. This dynamic is captured by the model \eqref{model}.

First, the following theorem demonstrates that the model in \eqref{model} represents a zero-sum game. In this context, the gains of some agents are exactly offset by the losses of others. In other words, the total sum of benefits and losses across all agents equals zero, resulting in no net gain or loss for the multi-agent system as a whole.

\begin{theorem}
The total state of the system in \eqref{model} remains constant over time. Specifically,  $\sum_{i \in \V} x_i(t)=\sum_{i \in \V} x_i(0),\, \forall t\geq 0$.
\end{theorem}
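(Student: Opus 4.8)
The plan is to show that the time derivative of the total state vanishes identically and then integrate. First I would differentiate the aggregate quantity and substitute the dynamics \eqref{model}, writing
\begin{align*}
\frac{d}{dt}\sum_{i\in\V} x_i = \sum_{i\in\V}\dot x_i = \sum_{i\in\V}\sum_{j\in\N_i} a_{ij}(x_i-x_j)x_i x_j.
\end{align*}
The whole argument then reduces to showing that this double sum is zero for every state $x\in\R^n_+$.

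The key step is a pairwise-cancellation (antisymmetry) argument over the edges of the network. Because the graph $\G$ is undirected, the condition $j\in\N_i$ (equivalently $(j,i)\in\E$) is equivalent to $i\in\N_j$, so every ordered pair $(i,j)$ appearing in the double sum has its mirror $(i,j)\mapsto(j,i)$ appearing as well. I would therefore group the summand indexed by $(i,j)$ with the one indexed by $(j,i)$, obtaining the combined contribution
\begin{align*}
a_{ij}(x_i-x_j)x_i x_j + a_{ji}(x_j-x_i)x_j x_i.
\end{align*}
Invoking the symmetry $a_{ij}=a_{ji}$ of the weighted adjacency matrix of the undirected graph, this collapses to $a_{ij}x_i x_j\big[(x_i-x_j)+(x_j-x_i)\big]=0$. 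Hence each unordered pair $\{i,j\}$ contributes nothing, and summing over all pairs yields $\frac{d}{dt}\sum_{i\in\V} x_i = 0$.

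Finally, since the derivative of $\sum_{i\in\V} x_i(t)$ is identically zero along every trajectory, the total state is constant in $t$; evaluating at $t=0$ gives $\sum_{i\in\V} x_i(t)=\sum_{i\in\V} x_i(0)$ for all $t\geq 0$, as claimed.

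I do not expect a genuine obstacle here: the result follows from the skew-symmetric structure of the interaction terms. The only points requiring care are the bookkeeping in reindexing the double sum as a sum over unordered pairs, and making explicit the use of $a_{ij}=a_{ji}$, which holds precisely because $\G$ is undirected. It is worth noting that the factor $x_i x_j$ plays no role in the cancellation — only the antisymmetry of $(x_i-x_j)$ under swapping $i$ and $j$, together with the symmetry of $a_{ij}$, matters — so the conservation law is quite robust to the precise form of the nonlinearity.
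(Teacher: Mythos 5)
Your proposal is correct and follows essentially the same route as the paper: differentiate the total state, substitute the dynamics, and cancel the $(i,j)$ and $(j,i)$ contributions pairwise using $a_{ij}=a_{ji}$ for the undirected graph. The paper expresses this cancellation compactly as $\frac{1}{2}\sum_{(i,j)\in\E}[a_{ij}(x_i-x_j)x_ix_j+a_{ji}(x_j-x_i)x_jx_i]=0$, which is exactly your unordered-pair grouping.
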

 
\begin{proof} The proof follows from the direct calculation:
  \begin{align*}
& \sum_{i \in \V} \dot x_i = \sum_{i \in \V} \sum_{j \in \N_i} a_{ij} (x_i-x_j) x_i x_j \\
 =&\frac{1}{2}\sum_{(i,j) \in \E} [ a_{ij} (x_i-x_j) x_i x_j +a_{ji} (x_j-x_i) x_j x_i ]=0
 \end{align*}
 for all $t \geq 0$. This result is due to $a_{ij}=a_{ji}$  in an undirected network. Thus, $\sum_{i \in \V} x_i(t)$ remains constant over time.\end{proof}

The winner-take-all feature is demonstrated in the following theorem, which describes the attractiveness of the set defined as:
 \begin{align} \label{mE}
\mE= \left\{ 
 \left.
\begin{array}{l}
x_i > 0, \, i\in \V_w\\
x_i = 0, \, i\in \V_l  
\end{array} \right|
\begin{array}{l}
x_i=x_j,\, \forall (i,j)\in \E \wedge   \V_w  \\
\V_w \cup \V_l =\V,\, \V_w \cap \V_l =\emptyset
\end{array}
 \right  \}.
 \end{align}
In this set, $\V_w$ represents the ``winner'' agents who take resources, indicated by their positive values, while $\V_l$ represents the ``loser'' agents, who take nothing, with values of zero. There may be more than one winner; however, winners either engage in direct competition and settle at the same value to reach equilibrium (which is later shown to be unstable) or are not directly connected in competition. 
 
 \begin{theorem} \label{theorem2}
Every element of $\mE$ is an equilibrium point of \eqref{model}, and every trajectory of \eqref{model} converges to an equilibrium point in $\mE$ as $t\rightarrow\infty$. 
\end{theorem}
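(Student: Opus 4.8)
The plan is to treat the statement as two separate claims. The first claim---that every element of $\mE$ is an equilibrium---I would settle by direct substitution. Fix $x\in\mE$. For a loser $i\in\V_l$ we have $x_i=0$, and since every term of $\dot x_i$ carries the factor $x_i$, it follows that $\dot x_i=0$. For a winner $i\in\V_w$ and a neighbor $j\in\N_i$, either $j\in\V_l$, so $x_j=0$ annihilates that term, or $j\in\V_w$ with $(i,j)\in\E\wedge\V_w$, so $x_i=x_j$ and the factor $(x_i-x_j)$ annihilates it. Hence $\dot x_i=0$ for every $i$, so $\mE$ lies in the equilibrium set.

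The convergence claim is the substantial part. By Theorem~1 every trajectory stays on the compact simplex slice $\Delta_c=\{x\in\R^n_+:\sum_{i\in\V}x_i=c\}$, where $c=\sum_{i\in\V}x_i(0)$, so orbits are bounded and $\Delta_c$ is invariant. The engine of the proof is the function $W(x)=\sum_{i\in\V}x_i^2$. Applying the same edge symmetrization used for Theorem~1 gives $\dot W=\sum_{(i,j)\in\E}a_{ij}x_ix_j(x_i-x_j)^2\ge0$, so $W$ is nondecreasing along trajectories. Moreover $\dot W=0$ forces $x_ix_j(x_i-x_j)^2=0$ on every edge, that is, on each edge an endpoint vanishes or the two endpoints coincide; setting $\V_w=\{i:x_i>0\}$ and $\V_l=\{i:x_i=0\}$ shows that the zero set of $\dot W$ is exactly $\mE$. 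Since $\mE$ is also the equilibrium set, LaSalle's invariance principle applied on $\Delta_c$ gives that the $\omega$-limit set $\Omega$ of every trajectory is a nonempty, compact, connected subset of $\mE$.

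The main obstacle is upgrading this set convergence to convergence to a single equilibrium, because $\mE$ typically contains continua of equilibria. To close the gap I would bring in a whole family of monotone invariants: the identical symmetrization shows that for every integer $p\ge1$ the power sum $P_p(x)=\sum_{i\in\V}x_i^p$ obeys $\dot P_p=\tfrac{p}{2}\sum_{(i,j)\in\E}a_{ij}x_ix_j(x_i-x_j)(x_i^{p-1}-x_j^{p-1})\ge0$, since on $\R^n_+$ the factors $(x_i-x_j)$ and $(x_i^{p-1}-x_j^{p-1})$ share a sign. Being bounded and nondecreasing, each $P_p$ converges as $t\to\infty$, so $P_1,\dots,P_n$ are constant on $\Omega$. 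As these $n$ power sums determine the multiset of coordinates, every point of $\Omega$ is a permutation of one fixed $n$-tuple, whence $\Omega$ is contained in a finite set. A connected subset of a finite set is a single point, so $\Omega$ is a singleton and the trajectory converges to an equilibrium in $\mE$. I expect this final, multiset-rigidity step to be the crux; a heavier alternative would be to invoke a Lojasiewicz gradient-inequality argument, which the polynomial (hence real-analytic) vector field would permit.
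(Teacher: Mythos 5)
Your proof is correct, and it takes a genuinely different route from the paper's. The paper works directly at the trajectory level: it defines $\hat\V$ as the set of agents whose states tend to zero, decomposes the complement into connected components, and rules out by contradiction the possibility that some component fails to converge to a common positive constant. That contradiction argument is long and delicate—it shows the running maximum over a component is eventually monotone, runs an induction over the finite sequence of time instants at which successive agents ``catch up'' to the leading group (with carefully tuned thresholds $\varepsilon,\delta$), and finishes with a second-derivative bound forcing each neighbour of a limiting agent to converge either to the same limit or to zero. Your argument replaces all of this with two structural observations: (i) $W=\sum_i x_i^2$ is nondecreasing with $\{\dot W=0\}\cap\R^n_+=\mE$, so LaSalle confines the $\omega$-limit set to $\mE$ (and, as a byproduct, identifies $\mE$ as \emph{exactly} the equilibrium set, which the paper only states in one direction); and (ii) the power sums $P_1,\dots,P_n$ are monotone and bounded, hence constant on the $\omega$-limit set, so Newton's identities pin the $\omega$-limit set inside a finite permutation orbit, and connectedness collapses it to a point. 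This multiset-rigidity step is precisely the right tool for the genuinely hard part of the theorem—upgrading convergence to the set $\mE$ (which contains continua of equilibria) to convergence to a single point—and it is where the paper spends most of its effort by other means. Your version is substantially shorter and, I believe, complete; the paper's hands-on argument yields some extra qualitative information (e.g., eventual monotonicity of the componentwise maximum) that your proof discards. Two details worth making explicit in a write-up: the sign claim $(x_i-x_j)(x_i^{p-1}-x_j^{p-1})\ge 0$ relies on $x\in\R^n_+$ (the map $t\mapsto t^{p-1}$ is monotone only on $[0,\infty)$ when $p-1$ is even), and the nonemptiness, compactness and connectedness of the $\omega$-limit set rely on the forward boundedness supplied by Theorem~1 together with invariance of $\R^n_+$.
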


\begin{proof} We first prove that every element of $\mE$ is an equilibrium point of \eqref{model}. This holds because, at every element of $\mE$, we have $\dot{x} = 0$, meaning the agents' states do not change. More specifically, for $x_i = 0$ with $i \in \V_l$, it is clear from \eqref{model} that $\dot{x}_i = 0$. For $x_i > 0$ with $i \in \V_w$, we compute:
 \begin{align*}
 \dot x_i = \sum_{j \in \N_i \cap \V_l} a_{ij} (x_i-x_j) x_i x_j +\sum_{(i,j)\in \E \wedge   \V_w } a_{ij} (x_i-x_j) x_i x_j. 
 \end{align*}
The first term is zero because $x_j = 0$, and the second term is zero because $x_i = x_j$.


Below, we aim to prove the second part of the statement: the convergence of every trajectory to an equilibrium point in $\mE$. For each trajectory, we define the set $\hat\V$ as the set of agents whose states converge to zero, meaning 
\begin{equation}
\lim_{t \rightarrow \infty} x_i(t) = 0,\;\forall i \in \hat\V,
\label{hatV}
\end{equation} 
and this limit does not hold for $i \notin \hat\V$. Let $\bar\V = \V \setminus \hat\V$.

If $\bar\V = \emptyset$, the trajectory converges to $x^* = 0 \in \mE$ as $t\rightarrow\infty$, and the proof is complete. If $\bar \V \neq \emptyset$, we can decompose it as $\bar \V = \bar \V_1 \cup \cdots \cup \bar \V_m$ for $m\geq 1$, where each subgraph $\G \wedge \bar \V_k$, for $k=1, \dots, m$, is connected, and $\bar\V_{k_1}\cap \bar\V_{k_2}=\emptyset$ for for $k_1 \neq k_2$. Moreover, for every edge $(i, j) \in \E \wedge \bar\V$, there exists some $k = 1, \dots, m$ such that $i, j \in \bar \V_k$. In other words, there is no edge $(i, j) \in \E \wedge \bar \V$ where $i \in \bar \V_{k_1}$ and $j \in \bar \V_{k_2}$ for $k_1 \neq k_2$.

If, for every $k = 1, \dots, m$, $\lim_{t\rightarrow\infty} x_i(t) =c_k, \forall i \in \bar \V_k$, holds with a constant $c_k > 0$, we can define $\V_w = \bar \V$, $\V_l = \hat \V$, and
  \begin{align*}
 &x^*=[x^*_1, x^*_2, \dots, x^*_n]\t \\
 &x_i^* =c_k,\; \forall i \in \bar \V_k,\; k=1,\cdots,m \\
 &x_i^* =0,\;\forall i \in \hat\V.
 \end{align*}
Because for every edge $(i, j) \in \E \wedge \bar \V$, there exists some $k = 1, \dots, m$ such that $i, j \in \bar \V_k$, we have   $x_i^*=x_j^*=c_k,\, \forall (i,j)\in \E \wedge   \V_w $. As a result, $x^* \in \mE$, and $\lim_{t \rightarrow \infty} x(t) = x^*$, which completes the proof.
Otherwise, there exists some $\kappa$ such that
 \begin{equation}\label{hypo}
 \lim_{t\rightarrow\infty} x_i(t) =c_\kappa>0 ,\; \forall i \in \bar\V_\kappa,\; \text{does not hold}.
 \end{equation}
A contradiction will be derived for the hypothesis \eqref{hypo} below, considering two cases to complete the proof.

{\it Case 1:  $|\bar\V_\kappa| =1$.}
 
Denote the only agent in $\bar \V_\kappa$ as $\ell$, which has no neighbor in $\bar \V$. As a result, we have
   \begin{align}
 \dot x_\ell = \sum_{j \in \N_\ell \cap \hat\V} a_{\ell j} (x_\ell -x_j) x_\ell x_j. \label{dotxell}\end{align}
On one hand, since $\ell \notin \hat \V$, $x_\ell(t)$ does not converge to zero. Thus, there exists a consant $\epsilon > 0$ such that $x_\ell(t_i) > \epsilon$ for an infinite sequence $t_1 < t_2 < \cdots$ that approaches infinity. On the other hand, by the definition of $\hat \V$ in \eqref{hatV}, there exists a time $T$ such that 
$x_j(t)<\epsilon,\;\forall t \geq T,\; j \in \N_\ell \cap \hat\V$.  Pick a time $t_l > T$ such that $x_\ell(t_l) > \epsilon > x_j(t_l),\;\forall j \in \N_\ell \cap \hat \V$. Therefore, by \eqref{dotxell}, we have $\dot{x}_\ell(t) \geq 0$ for $t \geq t_l$. This implies that $x_\ell(t)$ is increasing for $t \geq t_l$ and must converge to a limit, denoted as $c_\kappa$. However, this contradicts the hypothesis \eqref{hypo}.

{\it Case 2:  $|\bar\V_\kappa| \geq 2$.}

 Denote $a_{\max}(t)$ as  the maximum value of the agents in $ \bar\V_\kappa$, that is 
  \begin{align}
a_{\max}(t) =  \max_{i\in \bar\V_\kappa} x_i(t). 
 \end{align}
 It is evident that $\lim_{t\rightarrow\infty}  a_{\max}(t)=0$ does not hold. Otherwise, $\lim_{t\rightarrow\infty} x_i(t) =0$ would hold for all $i \in \bar\V_\kappa$,  which contradicts the definition of $\bar\V_\kappa$. Therefore, there exists a constant $\epsilon>0$ such that $a_{\max} (t_i)>\epsilon$ for an infinite sequence $t_1<t_2<\cdots$ that approaches infinity. Additionally, by the definition of $\hat \V$ in \eqref{hatV}, there exists a time $T_1$ such that $x_j(t)<\epsilon,\;\forall t \geq T_1,\; j \in   \hat\V$. Pick  a time $t_l>T_1$ such that $a_{\max} (t_l)>\epsilon>x_j(t_l),\;\forall  j \in  \hat\V$.

Let $\mu'_1=t_l$. There exists either a finite sequence $\mu'_1 \leq \mu'_2  \leq \cdots \leq \mu'_{s'^*}$ for some finite constant $s'^*$ (with $\mu'_{s'^*+1}=\infty$ for notational completeness) or an infinite sequence $\mu'_1 \leq \mu'_2  \leq \cdots$ approaching infinity, such that during every interval $[\mu'_s, \mu'_{s+1}]$, for $s=1,\cdots, s'^*$ (in the finite case) or $s\geq 1$ (in the infinite case),  the same agent holds the maximum value.  The function $a_{\max}(t)$ is increasing for $t \geq \mu'_1$ if it is increasing within each interval $[\mu'_s, \mu'_{s+1}]$. Denote the agent with the maximum value as $\ell'_s$ within the interval $[\mu'_s, \mu'_{s+1}]$ and its dynamics can be written as 
 \begin{align}
 \dot x_{\ell'_s} = \sum_{j \in \N_{\ell'_s} \cap (\hat\V \cup \bar \V_\kappa )} a_{\ell'_s j} (x_{\ell'_s} -x_j) x_{\ell'_s} x_j.  \end{align}
We have $\dot x_{\ell'_s}\geq 0$ (i.e., $x_{\ell'_s} (t)=a_{\max} (t)$ is increasing) because $x_{\ell'_s} -x_j\geq 0$ holds for $j \in  \bar \V_\kappa$, as agent $\ell'_s$ has the maximum value; and it also holds for $j\in \hat\V$, since $a_{\max} (t_l)>\epsilon>x_j(t_l)$, and thus $x_{\ell'_s} (t)=a_{\max} (t)>\epsilon>x_j(t)$ during the interval. 
From this argument, we conclude that  $a_{\max}(t)$ is increasing  for $t\geq t_l$. As a result, 
\begin{equation}
\lim_{t\rightarrow\infty}  a_{\max}(t)=c_\kappa \label{amaxlim}
\end{equation}
for a constant $c_\kappa >0$. 

For an arbitrarliy small $\delta>0$, particularly chosen as 
\begin{equation}
\delta< c_\kappa (2^{|\bar\V_\kappa|}  +1)^{-1} 2^{|\bar\V_\kappa| -1},
\end{equation}
we set $\varepsilon =   2^{1- |\bar\V_\kappa|} \delta$.
As a result,  $\varepsilon< c_\kappa (2^{|\bar\V_\kappa|}  +1)^{-1}$,  and 
\begin{equation}
c_\kappa-2^{s} \varepsilon >\varepsilon,\; \forall 0\leq s< |\bar\V_\kappa|. \label{verepsilon}
\end{equation}
For this $\varepsilon>0$, due to \eqref{hatV} and \eqref{amaxlim},  there exists a time $T_2\geq T_1$ such that $a_{\max}(t) >c_\kappa-\varepsilon,\forall t\geq T_2$ and $x_j(t)<\varepsilon,\;\forall t \geq T_2,\; j \in   \hat\V$.

Let $\mu_1=T_2$. Denote $\ell_1 \in  \bar\V_\kappa$ as an agent taking the maximum value at $\mu_1$, though it may not be the only agent reaching this maximum value. Let $\mu_2\geq \mu_1$ be the first finite time instant, if exists, at which another agent $\ell_2 \in  \bar\V_\kappa$ reaches the same value of $\ell_1$.
Recursively, let $\mu_{s+1}\geq \mu_s$ be the first finite time instant, if exists, at which  another agent $\ell_{s+1} \in  \bar\V_\kappa$,
$\ell_{s+1} \notin \{\ell_1,\cdots, \ell_s\}$,  whose value $x_{\ell_{s+1}} (\mu_{s+1})$ reaches the value of $\min_{i\in \{\ell_1,\cdots, \ell_s\}} x_i(\mu_{s+1})$. 
There exist $s^*$ finite time instants $\mu_1,\cdots, \mu_{s^*}$ for some constant $1\leq s^*\leq |\bar\V_\kappa|$.  Denote $\mu_{s^*+1} =\infty$ for notational convenience.

At $\mu_1$, we have $x_{\ell_1} (\mu_1) > c_\kappa-\varepsilon >\varepsilon>x_j$ for $j \in   \hat\V$.
During the interval $t \in [\mu_1, \mu_2]$,  no agent in  $\bar\V_\kappa$ exceeds the value of $x_{\ell_1}$. 
According to    
 \begin{align*}
 \dot x_{\ell_1} = & \sum_{j \in \N_i \cap \bar\V_\kappa } a_{\ell_1 j} (x_{\ell_1} -x_j) x_{\ell_1} x_j  \\
& + \sum_{j \in \N_i \cap \hat\V} a_{\ell_1 j} (x_{\ell_1} -x_j) x_{\ell_1} x_j,
 \end{align*}
we have $\dot x_{\ell_1}(t) \geq 0$, ensuring that $x_{\ell_1} (t) > c_\kappa-\varepsilon$ for $t \in [\mu_1, \mu_2]$.

Suppose that for $s^* > s\geq 1$, the following holds:
 \begin{align}
\sum_{ i\in \{\ell_1,\cdots, \ell_s\} } x_{i} (t) >s c_\kappa-2^{s-1} \varepsilon ,\; t\in [\mu_{s}, \mu_{s+1}]. \label{sumxi}
 \end{align}
 This serves as the induction hypothesis, and it is true for $s=1$.

From \eqref{sumxi}, we can deduce
\begin{align*}
& \min_{i\in \{\ell_1,\cdots, \ell_s\}} x_i(\mu_{s+1}) \\
 \geq & \sum_{ i\in \{\ell_1,\cdots, \ell_s\} } x_{i} (\mu_{s+1})  -(s-1) c_\kappa 
 > c_\kappa-2^{s-1} \varepsilon.
\end{align*}
According to the definition of $\mu_{s+1}$,  at $\mu_{s+1}$, we have 
\begin{align*}
x_{\ell_{s+1}} ( \mu_{s+1}) =\min_{i\in \{\ell_1,\cdots, \ell_s\}} x_i(\mu_{s+1}) > c_\kappa-2^{s-1} \varepsilon.
\end{align*}
Thus, we can conclude
\begin{align}
\sum_{ i\in \{\ell_1,\cdots, \ell_{s+1}\} } x_{i} (\mu_{s+1}) >& [s c_\kappa-2^{s-1} \varepsilon] + [ c_\kappa-2^{s-1} \varepsilon] \nonumber \\
 = & (s+1) c_\kappa-2^{s} \varepsilon. \label{sumxi2}
 \end{align}

For $t \in [\mu_{s+1}, \mu_{s+2}]$, there exists no agent in  $\bar\V_\kappa\setminus  \{\ell_1, \cdots, \ell_{s+1} \}$ whose value exceeds $x_i$ for any $ i\in \{\ell_1,\cdots, \ell_{s+1}\}$. Therefore,
  \begin{align}
 & \sum_{i\in \{\ell_1, \cdots, \ell_{s+1} \}}\dot x_i \nonumber \\
 = & \sum_{i\in \{\ell_1, \cdots, \ell_{s+1} \}, j \in \N_i \cap ( \bar\V_\kappa \setminus  \{\ell_1, \cdots, \ell_{s+1} \}) } a_{ij} (x_i -x_j) x_i x_j \nonumber \\
 & + \sum_{i\in \{\ell_1, \cdots, \ell_{s+1} \}, j \in \N_i \cap \hat\V} a_{ij} (x_i -x_j) x_i x_j  \nonumber\\
 \geq & \sum_{i\in \{\ell_1, \cdots, \ell_{s+1} \}, j \in \N_i \cap \hat\V} a_{ij} (x_i -x_j) x_i x_j .     \label{dxis1}
 \end{align}
From \eqref{verepsilon} and \eqref{sumxi2}, we have
  \begin{align*}
x_{i} (\mu_{s+1}) &\geq \sum_{ i\in \{\ell_1,\cdots, \ell_{s+1}\} } x_{i} (\mu_{s+1})- s c_\kappa \\
& > c_\kappa-2^{s} \varepsilon >\varepsilon>x_j ,\;
\forall i\in \{\ell_1,\cdots, \ell_{s+1}\}, \; j \in   \hat\V.
  \end{align*}
Thus, together with \eqref{dxis1}, we conclude that
$\sum_{ i\in \{\ell_1,\cdots, \ell_{s+1} \} } x_{i} (t)$ is increasing for  $t \in [\mu_{s+1}, \mu_{s+2}]$. Consequently,
 \begin{align}
\sum_{ i\in \{\ell_1,\cdots, \ell_{s+1}\} } x_{i} (t) > 
(s+1) c_\kappa-2^{s} \varepsilon  ,\; t\in [\mu_{s+1}, \mu_{s+2}]. \label{sumxi3}
 \end{align}
 which matches  \eqref{sumxi} with $s$ replaced by $s+1$.

Using mathematical induction, we establish that \eqref{sumxi} holds for  $s=s^*$, meaning
  \begin{align}
\sum_{ i\in \{\ell_1,\cdots, \ell_{s^*}\} } x_{i} (t) >s^* c_\kappa-2^{s^*-1} \varepsilon  ,\; t \geq \mu_{s^*}  \label{sumxifinal}
 \end{align}
This implies that
  \begin{align}
 x_{i} (t) \geq & \sum_{ i\in \{\ell_1,\cdots, \ell_{s^*}\} } x_{i} (t) -[s^*-1]c_\kappa  \nonumber\\
 > & c_\kappa-2^{s^*-1} \varepsilon
  \geq    c_\kappa-2^{|\bar\V_\kappa|-1} \varepsilon \nonumber \\
= & c_\kappa-\delta ,\; t \geq \mu_{s^*}  ,\; i\in \{\ell_1,\cdots, \ell_{s^*}\}.  \label{xifinal}
 \end{align}
 As a result, there exists at least one agent (since $s^*\geq 1$), denoted as $\ell \in \bar \V_\kappa$, whose limit is $c_\kappa$, that is,
 $\lim_{t\rightarrow\infty}x_\ell(t)=c_\kappa$.
 
Next, we examine the behavior of every neighbor  $\hbar \in \bar\V_\kappa$ of $\ell$. Since all agent states are bounded, every agent has a bounded second-order derivative, denoted by $\ddot x_\ell(t)\leq b$.  For any small $\gamma>0$, particularly with $\gamma<c_\kappa/3$, select a sufficiently small $\sigma>0$ satisfying $\sigma<\gamma$ and
 \begin{align}
 & \frac{1}{a_{\ell \hbar} (c_\kappa -\sigma) }\left( 2\sqrt{b \sigma} + \sum_{j \in \N_\ell  \cap (\bar\V_\kappa/ \{\hbar\} ) } a_{\ell j} \sigma c_\kappa^2 \right) \nonumber\\
  <& (c_\kappa -\sigma) \gamma -\gamma^2 . \label{gamma}
 \end{align}
Such a small $\sigma$ always exists because the inequality \eqref{gamma} holds for $\sigma = 0$, and the expression is continuous in $\sigma$.
Denote $\rho= 2\sqrt{b \sigma}$. For this $\sigma>0$, there exists a finite time $\mu$ such that  $x_\ell(t) > c_\kappa -\sigma >x_j(t)$, $\forall j\in \hat \V$, $t\geq \mu$.
 
If there exists a time $t^* \geq \mu$ such that $\dot x_\ell(t^*) \geq \rho$, then $\dot x_\ell(t) \geq \rho/2$, $t\in [t^*, t^*+\rho/(2b)]$.  
Since $x_\ell(t^*) > c_\kappa -\sigma$, this implies $x_\ell(t^* +\rho/(2b)) > c_\kappa -\sigma +\rho^2/(4b)=c_\kappa$, which contradicts \eqref{amaxlim}.
Thus, we conclude that
\begin{equation}
\dot x_\ell(t) < \rho= 2\sqrt{b \sigma} ,\; \forall t\geq \mu.
\end{equation} 
 
  For $t\geq \mu$,  we have 
     \begin{align*}
  \dot x_\ell  
 =& \sum_{j \in \N_\ell  \cap (\bar\V_\kappa/ \{\hbar\} ) } a_{\ell j} (x_\ell-x_j) x_\ell x_j+
 a_{\ell \hbar} (x_\ell-x_\hbar) x_\ell x_\hbar \\
&  + \sum_{j \in \N_i \cap \hat\V} a_{\ell j} (x_\ell -x_j) x_\ell x_j  \nonumber\\
   \geq & - \sum_{j \in \N_\ell  \cap (\bar\V_\kappa/ \{\hbar\} ) } a_{\ell j} \sigma c_\kappa^2 +
 a_{\ell \hbar} (x_\ell-x_\hbar) x_\ell x_\hbar 
 \end{align*}
 Using \eqref{gamma}, it follows that
    \begin{align*}
& (x_\ell-x_\hbar)   x_\hbar  <\frac{1}{a_{\ell \hbar}x_\ell }\left( 2\sqrt{b \sigma}+ \sum_{j \in \N_\ell  \cap (\bar\V_\kappa/ \{\hbar\} ) } a_{\ell j} \sigma c_\kappa^2 \right)  \nonumber\\
   <& \frac{1}{a_{\ell \hbar} (c_\kappa -\sigma) }\left( 2\sqrt{b \sigma} + \sum_{j \in \N_\ell  \cap (\bar\V_\kappa/ \{\hbar\} ) } a_{\ell j} \sigma c_\kappa^2 \right) \\
   <&  (c_\kappa -\sigma) \gamma -\gamma^2 < x_\ell \gamma -\gamma^2.
 \end{align*}
 
 The above inequality 
   \begin{align*}
 (x_\ell-x_\hbar)   x_\hbar <x_\ell \gamma -\gamma^2  \end{align*}
is equivalent to 
  \begin{align*}
 (x_\hbar - (x_\ell-\gamma) ) (x_\hbar -\gamma)  >0. \end{align*}  
This results in the conclusion that either
  \begin{align*}
x_\hbar(t) > x_\ell(t)-\gamma > c_\kappa-\sigma-\gamma >c_\kappa-2\gamma ,\;\forall t\geq \mu
\end{align*}  
or 
  \begin{align*}
x_\hbar (t) <\gamma ,\;\forall t\geq \mu
\end{align*}  
Therefore,  every neighbor $\hbar \in \bar\V_\kappa$ of $\ell$  either converges to $c_\kappa$ or to zero.  This contradicts either the hypothesis \eqref{hypo} or the definition of $\hat \V$ in \eqref{hatV}, which states that no agent in $\bar \V_\kappa$ converges to zero.  \end{proof} 
 
Theorem~\ref{theorem2} states that every trajectory of \eqref{model} converges to an equilibrium point in the set $\mE$, where the system exhibits a winners-take-all behavior. Based on the definition of $\mE$ in \eqref{mE}, if there are multiple winners, they fall into two distinct categories: (i) winners who engage in direct competition and settle at the same value, and (ii) winners who are not directly connected and thus do not compete. More precisely, we can partition $\mE$ into two subsets, $\mE_u$ and $\mE_s$, which correspond to these categories:
  \begin{align*}
\mE_u= \left\{ 
 \left.
\begin{array}{l}
x_i > 0, \;  i\in \V_w\\
x_i = 0, \;  i\in \V_l  
\end{array} \right|
\begin{array}{l}
x_i=x_j,\; \forall (i,j)\in \E \wedge   \V_w \\
 \E \wedge   \V_w \neq \emptyset \\
 \V_w \cup \V_l =\V,\, \V_w \cap \V_l =\emptyset \end{array}
 \right  \}
 \end{align*}
and
 \begin{align*}
\mE_s= \left\{ 
 \left.
\begin{array}{l}
x_i > 0, \; i\in \V_w\\
x_i = 0, \; i\in \V_l  
\end{array} \right| 
\begin{array}{l}
\E \wedge   \V_w  = \emptyset \\
\V_w \cup \V_l =\V,\, \V_w \cap \V_l =\emptyset
\end{array}
\right \}.
 \end{align*}
In particular,  $\mE = \mE_u \cup \mE_s$. The following theorem demonstrates that every equilibrium point in $\mE_u$ is unstable. This implies that winners who engage in direct competition cannot maintain a stable settlement at the same value. Every trajectory of \eqref{model} generally converges to an equilibrium point in $\mE_s$.

\begin{theorem} \label{theorem3}
Every equilibrium point of \eqref{model} in $\mE_u$ is unstable.
\end{theorem}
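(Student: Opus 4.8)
The plan is to apply Lyapunov's indirect (first) method: I will show that the Jacobian of the vector field in \eqref{model}, evaluated at an arbitrary $x^*\in\mE_u$, possesses an eigenvalue with strictly positive real part, which forces $x^*$ to be unstable. The guiding intuition is that two directly competing winners sitting at a common value are in a knife-edge balance: if one is nudged slightly above the other, the term $a_{ij}(x_i-x_j)x_ix_j$ pushes it still higher while pulling its rival down. This positive feedback on the gap between competing winners should surface as a positive eigenvalue of the linearization.

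First I would write $f_i(x)=\sum_{j\in\N_i}a_{ij}(x_i-x_j)x_ix_j=\sum_{j\in\N_i}a_{ij}(x_i^2x_j-x_ix_j^2)$ and evaluate the partial derivatives at $x^*$, where every winner in a given connected component of $\G\wedge\V_w$ shares a common value $c>0$ while every loser equals zero. A short computation yields, for a winner $i$: $\partial f_i/\partial x_i=c^2\sum_{j\in\N_i\cap\V_w}a_{ij}$; $\partial f_i/\partial x_m=-a_{im}c^2$ for a winner neighbour $m$; and $\partial f_i/\partial x_m=a_{im}c^2$ for a loser neighbour $m$. For a loser $i$, every partial derivative vanishes except the diagonal one, $\partial f_i/\partial x_i=-\sum_{j\in\N_i\cap\V_w}a_{ij}c_j^2\le0$, where $c_j>0$ is the equilibrium value of winner $j$. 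Ordering coordinates with winners first, the Jacobian is therefore block upper-triangular, $J=\begin{bmatrix}J_{ww}&J_{wl}\\0&D_{ll}\end{bmatrix}$, with $D_{ll}$ diagonal and nonpositive, so the spectrum of $J$ is the union of the spectra of $J_{ww}$ and $D_{ll}$.

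The decisive observation concerns the structure of $J_{ww}$. Since winners in distinct components of $\G\wedge\V_w$ are non-adjacent, $J_{ww}$ is itself block diagonal over these components, and the block attached to a component with vertex set $W$ and common value $c$ equals $c^2 L_W$, where $L_W$ is the weighted graph Laplacian of $\G\wedge W$. Consequently $J_{ww}$ is symmetric positive semidefinite. Because $\E\wedge\V_w\neq\emptyset$, at least one such component contains an edge and hence has at least two vertices; being connected on $\ge2$ vertices, its Laplacian $L_W$ has a strictly positive eigenvalue $\mu>0$, so $c^2\mu$ is a positive eigenvalue of $J_{ww}$, and thus of $J$. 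By Lyapunov's indirect method, $x^*$ is unstable.

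The step I expect to demand the most care is confirming that the instability occurs within the invariant domain $\R^n_+$ rather than along an inadmissible direction. For the positive eigenvalue $\lambda=c^2\mu>0$, since $\lambda$ exceeds every (nonpositive) eigenvalue of $D_{ll}$, the corresponding eigenvector of the triangular matrix $J$ takes the form with zero loser components and winner part a non-constant Laplacian eigenvector of $W$ (orthogonal to the all-ones vector). The perturbation along this direction therefore keeps every loser coordinate at zero and, for sufficiently small amplitude, keeps every winner coordinate near $c>0$ and hence strictly positive, so it stays in $\R^n_+$; this certifies that the unstable manifold is admissible and that the instability conclusion is meaningful for the constrained system. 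The only remaining routine work is checking the partial-derivative evaluations and confirming, via the triangular structure, that the coupling block $J_{wl}$ plays no role in the eigenvalue count.
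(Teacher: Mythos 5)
Your proposal is correct and follows essentially the same route as the paper: both arguments reduce to observing that the linearization on the winner component is $c^2$ times a graph Laplacian with a \emph{positive} sign, whose positive eigenvalues (for a connected component with at least one edge) yield instability by the indirect method. The only cosmetic difference is that the paper first restricts to the invariant face $\{x_i=0,\ i\in\V_l\}$ and to a single connected winner component before linearizing, whereas you linearize the full system and extract the same conclusion from the block upper-triangular structure of the Jacobian, with your closing check on admissibility of the unstable direction within $\R^n_+$ playing the role of the paper's restriction to $B_\delta$.
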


\begin{proof} Consider any equilibrium point  $x^*=[x^*_1, x^*_2, \dots, x^*_n]\t \in \mE_u$.
According to the definition of $\mE_u$ where $\E \wedge   \V_w \neq \emptyset$, there exists a subset $\bar \V_w \subset \V_w$ such that $\G\wedge \bar \V_w$ is connected and $|\bar \V_w |\geq 2$. Additionally, there are no edges between an agent in $\bar \V_w$ and an agent in $\V_w\setminus \bar\V_w$.
 As a result, the equation \eqref{model} for the agents in $\bar\V_w$ becomes
   \begin{align}
 \dot x_i =& \sum_{j \in \N_i \cap \bar\V_w} a_{ij} (x_i-x_j) x_i x_j \nonumber\\
 & + \sum_{j \in \N_i \cap \V_l} a_{ij} (x_i-x_j) x_i x_j ,\;\forall i\in \bar\V_w. \label{xibarVw}
 \end{align}
 
According to definition of $\mE_u$, we have $x^*_i =c, \;  i\in \bar \V_w$ for a constant $c>0$ and $x^*_i = 0, \;  i\in \V_l$.
Define a neighbourhood of $x^*$ as follows: for a constant $\delta>0$,
\begin{align*}
   B_\delta=\left \{ x \in  \R_+^n \left|
   \begin{array}{l} 
   x_i \in (x^*_i-\delta, x^*_i+\delta ),  \;  i\in \V_w\\
   x_i = 0, \;  i\in \V_l 
   \end{array} \right. \right\}.
 \end{align*}
In $B_\delta$, the equation \eqref{xibarVw} reduces to
  \begin{align}
 \dot x_i =& \sum_{j \in \N_i \cap \bar\V_w} a_{ij} (x_i-x_j) x_i x_j  ,\;\forall i\in \bar\V_w, \label{xibarVw2}
 \end{align}
which is an autonomous subsystem completely decoupled from the original system \eqref{model}. 
Denote $\tilde x_i = x_i -x_i^*, \;\forall i\in \bar\V_w$. The system \eqref{xibarVw2} can be linearized to
  \begin{align}
 \dot {\tilde x}_i =& \sum_{j \in \N_i \cap \bar\V_w} a_{ij} \big[ (2x_i x_j -x_j^2)|_{x_i=x_j=c} \tilde x_i \nonumber\\
& +(x_i^2-2x_i x_j) |_{x_i=x_j=c} \tilde x_j \big] \nonumber\\
 =& c^2 \sum_{j \in \N_i \cap \bar\V_w} a_{ij}  (\tilde x_i -  \tilde x_j )  ,\;\forall i\in \bar\V_w. \label{xibarVw3}
 \end{align}
Clearly, the linear system \eqref{xibarVw3} describes the behavior of the (at least two) agents in $\bar\V_w$ where the network $\G\wedge \bar \V_w$ is connected. Consequently, the linear system matrix has a zero eigenvalue and the remaining eigenvalues are positive. Therefore, the system \eqref{xibarVw3} is unstable, and hence the original nonlinear system \eqref{xibarVw2} is also unstable. The proof is thus completed.   \end{proof}

\begin{remark}
Let $\tau=-t$ and $y_i(\tau)=x_i(-\tau)=x_i(t)$. The model \eqref{model} can be rewritten as 
 \begin{align}
 \frac{d y_i(\tau)}{d\tau} = -\sum_{j \in \N_i} a_{ij} (y_i(\tau)-y_j(\tau)) y_i(\tau) y_j(\tau),\;\forall i\in \V \label{reversemodel}
 \end{align}
which can also be expressed in compact form as
 \begin{align}
 \frac{d y(\tau)}{d\tau} = -\L(\tau) y(\tau) . \label{reversemodel2} \end{align}
The system \eqref{reversemodel} has a similar form to \eqref{model}, but with an additional negative sign. 
 The $(i,j)$-entry of the matrix $\L(\tau)$ is given by
 $l_{ij} (\tau) =-a_{ij} y_i(\tau) y_j(\tau)\leq 0$ for $i\neq j$ and 
 $l_{ii}(\tau)=-\sum_{j=1}^{n}l_{ij}(\tau) \geq 0$.  
Clearly, $\L(\tau)$ is a time-varying Laplacian matrix, and thus \eqref{reversemodel2} describes consensus behavior for $\tau \geq 0$ (or equivalently, $t \leq 0$ in reverse time) under a certain graph connectivity condition. The nonlinear term $x_i x_j$ (i.e., $y_iy_j$) does not contribute to the consensus behavior in \eqref{reversemodel}, but it plays a crucial role in the reverse consensus model \eqref{model}. Without this nonlinearity, a linear unstable system would yield divergent trajectories toward both positive and negative infinity, failing to capture the dynamics of resource-seizing mechanisms for a fixed amount of resources.
\end{remark}

 \begin{remark}
The model \eqref{model} is a simple representation of the dynamics described in the theorems of this section. It captures the essential characteristics while remaining a straightforward expression. This model can be adapted to more general forms to address specific features in different applications, while preserving its fundamental properties. Specifically, the term $x_i - x_j$ can be replaced with $f(x_i-x_j)$, where $f$ is any continuously differentiable odd function satisfying $f(a) > 0$ for all $a > 0$. Similarly, the term $x_i x_j$ can be replaced with $g(x_i, x_j)$, where $g$ is any continuously differentiable function such that $g(x_i, x_j) = g(x_j, x_i)$ and $g(0, a) = g(a, 0) = 0$ for all $a \geq 0$. Even with these modifications, the theorems still hold. 
 \end{remark}

\begin{figure}[t]
   \centering
 \includegraphics[width=7cm]{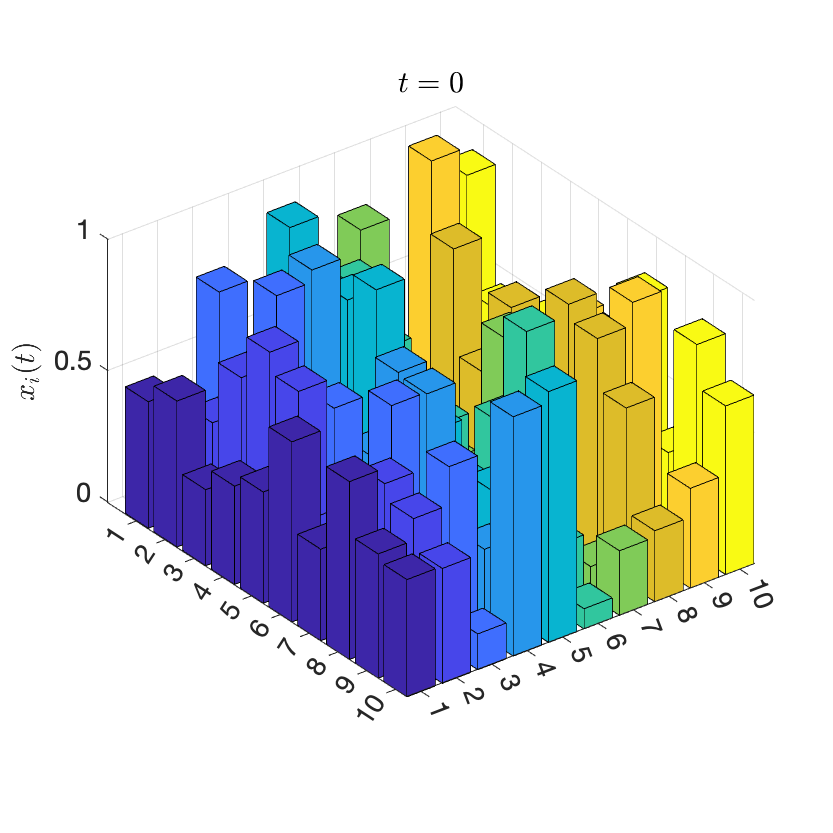} \\
 \vspace*{-7mm}
  \includegraphics[width=7cm]{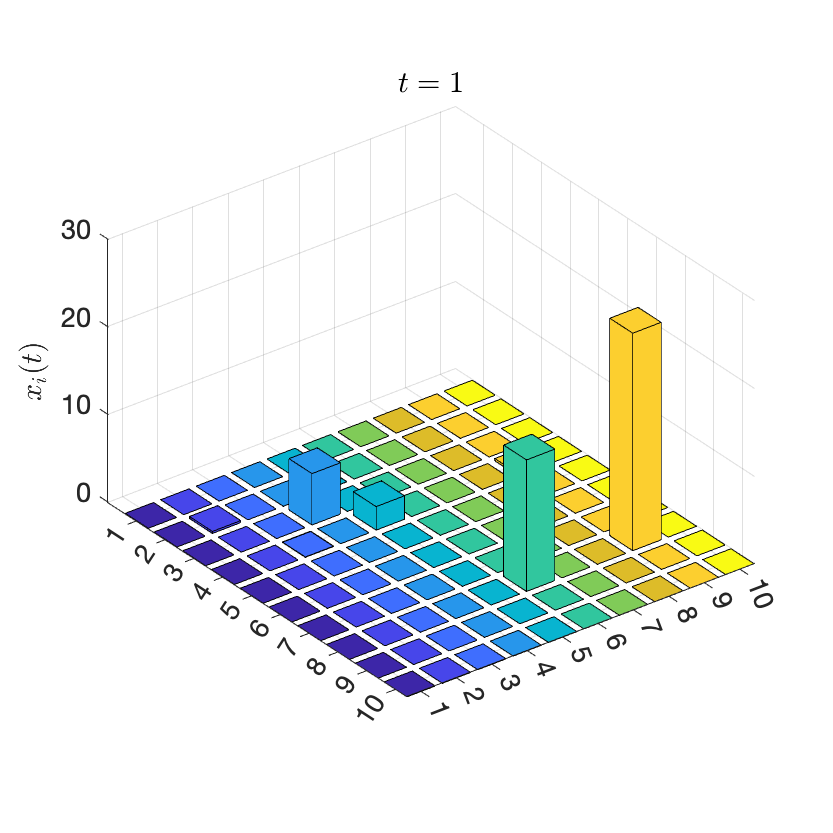}\\
   \vspace*{-7mm}
 \caption{The states of 100 agents at the initial time (top graph) and at the final time (bottom graph).}
  \label{fig.100agents1}
\end{figure}

\begin{figure}[t]
   \centering
 \includegraphics[width=9.5cm]{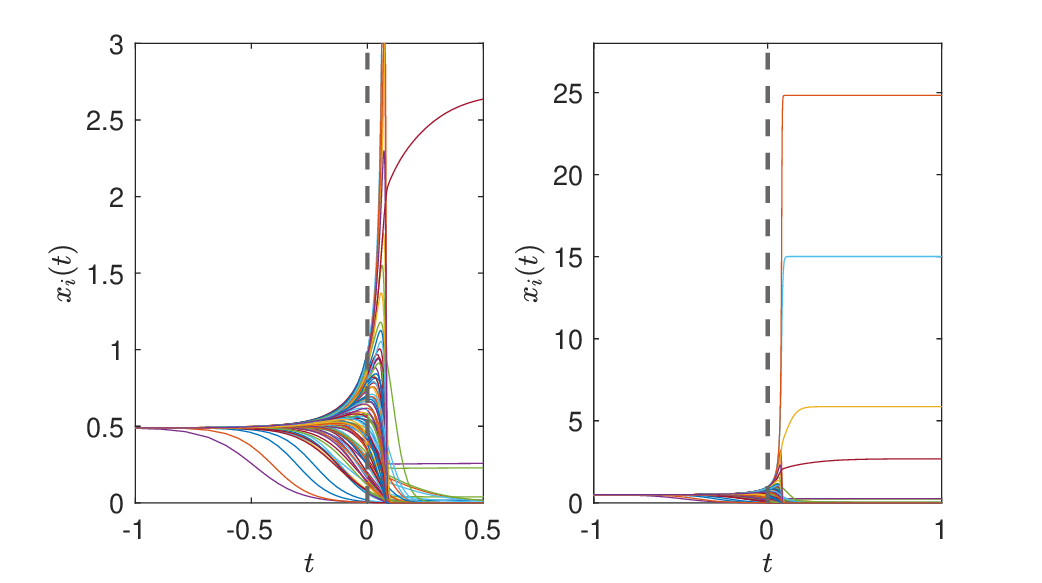} \\
 \caption{The trajectories of 100 agents over positive and negative time. }
  \label{fig.100agents3}
    \centering
 \includegraphics[width=9.5cm]{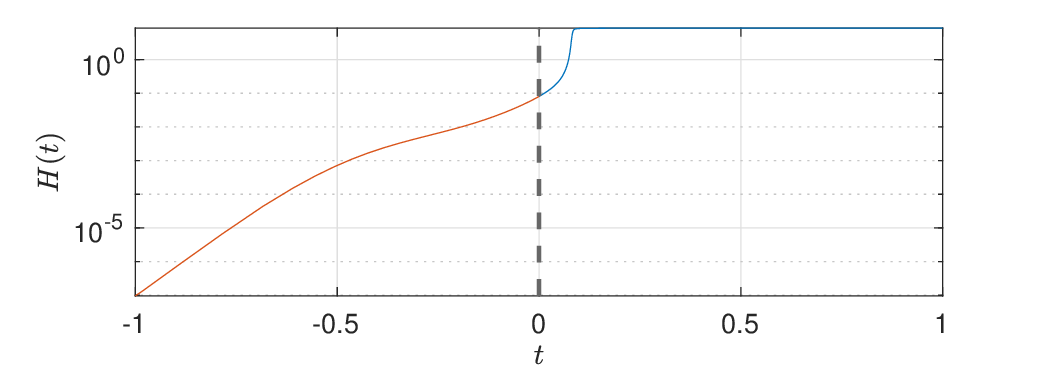} \\
 \caption{Profile of entropy evolution over positive and negative time.}
  \label{fig.100agents4}
\end{figure}

\begin{figure}[t]
   \center{  
 \includegraphics[width=9.5cm]{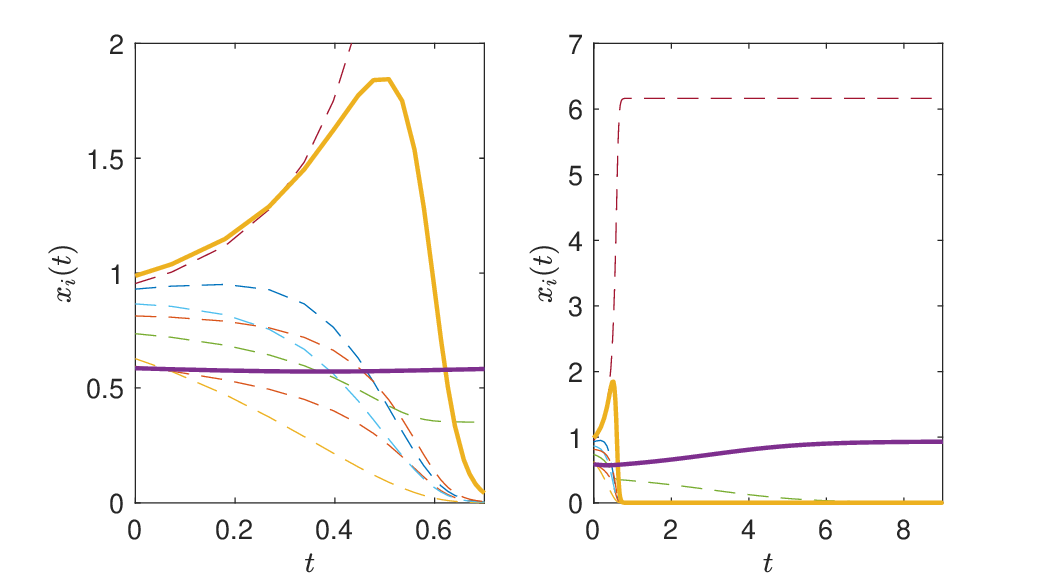} }
 \caption{The trajectories of nine agents over time. }
  \label{fig.9agents}
\end{figure}

\section{Simulation}
 \label{sec:simu}

We use numerical examples to illustrate the behavior of the model described by \eqref{model}. First, we consider a network of 100 agents with initial states randomly chosen in the range $[0,1]$ at $t=0$. The weighted adjacency matrix is set with $a_{ij} = 0$ or $a_{ij} = 1$. The model is simulated from $t=0$ to $t=1$, during which the winners-take-all phenomenon is observed, as shown in Fig.~\ref{fig.100agents1}. The top graph depicts the initial states of the agents, while the bottom graph shows the final states. At the final time, four agents emerge as winners, with the remaining agents' states having vanished, classifying them as losers.

To illustrate the reverse consensus model described by \eqref{model}, we run the simulation from $t=0$ to $t=-1$ in negative time. Fig.~\ref{fig.100agents3} demonstrates that consensus is achieved in negative time. The two graphs in Fig.~\ref{fig.100agents3} display the same trajectories of the 100 agents: the left graph provides a zoomed-in view for more detail, while the right graph shows the overall performance. To quantitatively describe the disorder among the agents, we define the entropy as
\begin{equation*}
H(t)= \frac{1}{n} \sum_{i\in\V} \left( x_i(t) -\frac{1}{n} \sum_{i\in \V}x_i(t)\right)^2.
\end{equation*}
This formula measures the dispersion of the agents' states from the average. Fig.~\ref{fig.100agents4} shows the profile of entropy evolution over positive and negative time. In positive time, the entropy increases to a constant, reflecting the resource seizing in competition that causes more disorder. In negative time, the entropy decreases to zero, indicating consensus. The plot is shown on a logarithmic scale to better highlight the trend as it approaches zero.

Interestingly, the initial values of the agents do not determine the winners; rather, the network topology plays a crucial role. To demonstrate this, we run a simulation with nine agents and plot the trajectories over time in Fig.~\ref{fig.9agents}. It is observed that the agent with the maximum initial value becomes a loser, while the agent with the minimum initial value emerges as a winner. Again, the two graphs in this figure display the same trajectories to better illustrate the details and overall performance.

We denote the agent with the minimum initial value as $\alpha$, whose initial and final values are $(0.59, 0.93)$. Next, we conduct a more extensive simulation by varying the weights $a_{\alpha j}$, where $j\in \V \setminus \{\alpha\}$, to either $0$ or $1$, and selecting the initial value $x_\alpha(0)$ from the range $[0,1.5]$. The settings for the other agents remain unchanged.

There are $2^8$ possible choices of neighbors (or called opponents) for agent $\alpha$, and the corresponding final values are plotted in Fig.~\ref{fig.optimal} for each initial value. The pair $(0.59, 0.93)$, corresponding to Fig.~\ref{fig.9agents}, is marked as $*$ in the figure. The bar in the figure specifies the range of initial values for the other eight agents. The two dotted lines indicate the sum of the agents' values and the relationship ``final value = initial value." It is observed that the initial value plays a significant role in the agent's success, while the choice of opponents is also critical. For instance, when the initial value is $x_\alpha(0) = 1.0$, the final value can result in two extremes: either the agent becomes a loser or emerges as the absolute winner, gaining the sum of all agents' values, i.e., the sole winner of the competition.

This simulation can be formulated as the following optimization problem:
\begin{align}
& \max_{a_{\alpha j} \in\{0,1\}, j\in \V \setminus \{\alpha\}}   x_{\alpha}(T) \nonumber \\
\text{subject to}\; & \text{Eq.~\eqref{model}}\; \text{with fixed} \nonumber \\
& x(0) \in \R_+^n,  \nonumber \\
& a_{i,j},\; i,j \in \V \setminus \{\alpha\} \label{opti}
\end{align}
for a fixed final time $T > 0$. The solution to this optimization problem shows how agent $\alpha$ can select its opponents to maximize its final value when entering a competitive network. The problem \eqref{opti} was solved in the above example using an exhaustive search, as shown in Fig.~\ref{fig.optimal}. However, the problem becomes more complex as the network size increases, additional constraints are introduced, and the problem is extended to multiple agents, such as $\alpha$, $\beta$, $\gamma$, etc.

In the case of optimization involving multiple agents, an algorithm to find a Nash equilibrium may be necessary. These challenges remain unresolved until we better understand the explicit mechanism by which the network topology, combined with the agents' initial values, determines the winners. This is a compelling direction for future research.
 
\begin{figure}[t]
   \center{  
 \includegraphics[width=7cm]{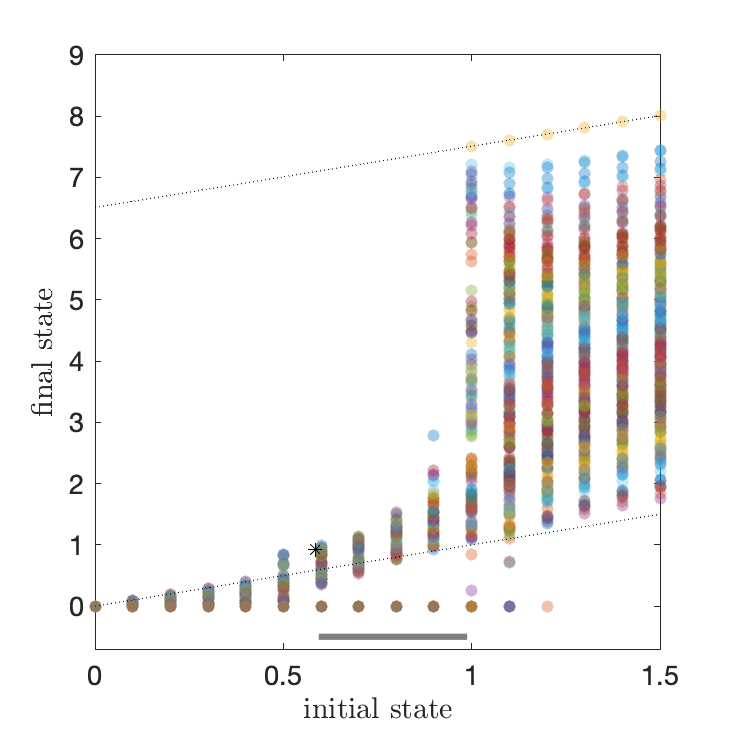} }
 \caption{Final state vs. initial state of agent $\alpha$ for various choices of opponents.}
  \label{fig.optimal}
\end{figure}
 
 \section{Conclusion}
 
 \label{sec:conclusion}

In this paper, we introduced a continuous-time multi-agent model with simple nonlinear dynamics that captures the winners-take-all behavior in a zero-sum game scenario. The model features equilibria where states of winners and losers are clearly defined. Specifically, loser states completely vanish while winner states persist. When winners do not directly compete, multiple winners can exist. These results are supported by rigorous theoretical analysis. Becoming a winner depends not only on the initial state but also on the network topology. The mechanisms through which network topology and initial agent values influence the determination of winners present intriguing avenues for future research. Additionally, investigating strategies for designing network topologies to achieve desired outcomes remains a challenging and important question for future studies.

\bibliographystyle{ieeetr}

\bibliography{Reference}

\end{document}